\newtheorem{theorem}{Theorem}
\newtheorem{definition}[theorem]{Definition}
\newtheorem{lemma}[theorem]{Lemma}
\begin{document}

%%%%%%%%%%%%%%%%%%%%%%%%%%%%%%%%%%%%%%%%%%%%%%%%%%%%%%%
\title{ 
$K_{i}^{loc}(\mathbb{C})$, $i = 0, 1$.
}
\author{Nicolae Teleman \\
Dipartimento di Scienze Matematiche, Universita' Politecnica delle Marche \\
E-mail:  teleman@dipmat.univpm.it}
\date{}                          % Activate to display a given date or no date

\maketitle
%%%%%%%%%%%%%%  Sect.   1    Abstract    Begin
\section{abstract}  
In this article we compute the {\em local algebraic $K$-theory}, $ i = 0, 1$, of the algebra of complex numbers $\mathbb{C}$ endowed with the  trivial filtration, i.e.  $\mathbb{C}_{\mu}= \mathbb{C}$, for any  $\mu \in \mathbb{N}$; {\em local algebras} and {\em local} algebraic $K^{loc}_{i}$-theory were introduced in \cite{Teleman_arXiv_IV}.
\par
Theorem 3 states the result. 
\par
This case corresponds in the simplest case to the Alexander-Spanier {\em local $K$-theory} over the point.
\par
This article is part of a comprehensive program aimed at re-stating the index theorem, see \cite{Teleman_arXiv_III}.
Other articles in this series are
\cite{Teleman_arXiv_IV},   \cite{Teleman_arXiv_I},  \cite{Teleman_arXiv_II}.
%%%%%%%%%%%%%       Sect.   1         Abstract     End
%%%%%%%%%%%%%      Sect.   2   Introduction   Begin
\section{introduction}  %%%%%%%   
\par
We recall the definition of the first two {\em local algebraic}  $K$-groups introduced in 
\cite{Teleman_arXiv_III} .  
A {\em localised algebra} $\mathit{A}$ is a filtrated associative algebra with unit; the filtration is given by a system of the vector subspaces $\mathit{A}_{\mu}$, $\mu \in \mathbb{N}$,  satisfying certain conditions, see \cite{Teleman_arXiv_IV} for precise definition.
\par
%%%%%%%%%%
By definition {\em the group}  $K^{loc}_{0}(\mathit{A})$ {\em  of the quotient space of the space of the Grothendieck completion of the space of idempotent matrices through three equivalence relations: -i) stabilisation $\sim_{s}$, -2) {\em local} conjugation} $\sim_{l}$,  {\em and} -3) {\em projective limit with respect to the filtration}.  
\par
By definition, $K_{1}^{loc} (\mathit{A})$ {\em is the projective limit of the}  {\em local} $K_{1}(\mathit{A}_{\mu})$ {\em groups. The group}
$K_{1}(\mathit{A}_{\mu})$ {\em is by definition the quotient of } $\mathbb{GL}(\mathit{A}_{\mu})$ {\em modulo the equivalence relation
generated by: -1) stabilisation $\sim_{s}$,  --2) {\em local} conjugation $\sim_{l}$ and -3)} $\sim_{\mathbb{O}(\mathit{A}_{\mu})}$, {\em where} $\mathbb{O}(\mathit{A}_{\mu})$  {\em is the sub-module generated by elements of the form} $ u \oplus u^{-1} $,  {\em for any} $u \in \mathbb{GL}(\mathit{A}_{\mu})$.  The class of any invertible element $u$ modulo conjugation (inner auto-morphisms) is called the {\em Jordan canonical form of} $u$. 
The equivalence relation $\sim_{\mathbb{O}(\mathit{A}_{\mu})}$ insures existence of opposite elements in $K_{1}(\mathit{A}_{\mu})$ and  $K_{1}^{loc}(\mathit{A})$.
\par
%%%%%%%%%%%%%%%%%%%   Sect  2  Introduction    End
\section{Notation. Preliminaries.}    %%%%%%%   Sect. 3    Notation. Preliminaries   Begin.
In \cite{Teleman_arXiv_IV} we  introduced the {\em local algebraic} $K_{i}^{loc}(\mathit{A})$, $i = 0, 1$, for  {\em localised algebras}.

\par
\begin{definition}  %%%%%%%%   Definition 1         Begin
Let $J_{n, \lambda} \in \mathbb{GL}_{n}(\mathbb{C})$, $2 \leqslant n$, denote the Jordan cell matrix of size $n$ having the
entry $\lambda$ on the diagonal
\begin{equation}  %%%%%%   Eqn   ()
J_{n, \lambda} =
\begin{pmatrix}
\lambda & 1 & 0 & 0 & ...... & 0 & 0 \\
0 & \lambda & 1 & 0 & ...... & 0 & 0\\
... & ... &  ... & ...  & ...  & ...  & ... \\
0 & 0 & 0 & 0 & ...... & \lambda & 1\\
0 & 0 & 0 & 0 & ...... & 0 & \lambda
\end{pmatrix}
;
\end{equation}   %%%%%%    Eqn  ()
it is understood that $J_{1, \lambda} = \lambda . 1_{1}$.
\par
Recall that  for of any matrix $A$,   $J(A)$ denoted the Jordan canonical form of $A$, \S 9. Remark 43.
\end{definition}    %%%%%%%   Definition   1       End
%%%%%%%%%%%%%%%%%%%%%%%%%%%%%%%%%%%%%%%%%%%
\begin{definition}  %%%%%%%   Definition  2   Begin
Let   $\bar{\mathbb{C}} := \mathbb{C} \setminus \{-1,  0,  1  \}$.
\par
Introduce on $\bar{\mathbb{C}}$ the following equivalence relation $\sim_{r}$:    $\lambda \sim_{r} \lambda^{'}$ if and only if
$\lambda = \lambda^{'}$ or $\lambda^{-1} = \lambda^{'}$.
\par
Define
\begin{equation}  %%%%   Eqn  (2)  Begin
\hat{\mathbb{C}} := \bar{\mathbb{C}} /  \sim_{r}.
\end{equation}  %%%%%  Eqn  (2)   End   
\end{definition}  %%%%%%   Definition  2   End
%%%%%%%%%%%%%%%%%%%%%%%%%%%%%%%%%%%%%%%%%%%%
\section{The result and its proof.}  %%%%%%%%%%%%%   Sect   4.
\begin{theorem}   %%%%%%   Theorem  3   Begin
Let $\mathbb{C}$ be endowed with the trivial filtration ($\mathbb{C}_{\mu} = \mathbb{C}$ for any $\mu \in \mathbb{N}$).
\par
Then
-i)
\begin{equation}  %%%%%   Eqn  (3)  Begin
 K_{0}^{loc}(\mathbb{C}) = \mathbb{Z}
 \end{equation}   %%%%%%   Eqn  (3)   End
\par
-ii)
\begin{equation}  %%%%   Eqn   (4)   Begin
K_{1}^{loc}(\mathbb{C}) = \\
\end{equation}   %%%%%   Eqn  (4)   End
\begin{equation*}
= \oplus_{1 \leqslant n \in \mathbb{N}} \; \; \mathbb{Z}_{2} [J_{n, -1}]  \;
\;\oplus_{2 \leqslant n \in \mathbb{N}} \; \; \mathbb{Z}_{2} [J_{n, 1}]  \oplus \\
\end{equation*}
\begin{equation*}
\oplus_{ 
1 \leqslant n \in \mathbb{N}, \;\;
\hat{\lambda} \in \hat{\mathbb{C}}} \;\;
 \mathbb{Z}  [J_{n, \hat{\lambda}}]
\end{equation*}
The group structure in $K_{1}^{loc}(\mathbb{C})$ is provided by the relations
\begin{equation*}
m_{1} [ J_{n, \lambda}]  +  m_{2} [ J_{n, \lambda^{-1}}] = ( m_{1} -  m_{2}) [ J_{n, \lambda}],    \;\;\;  for \; any \; m_{1},  m_{2} \in \mathbb{N} 
\end{equation*}
\begin{equation*}
- m [ J_{n, \lambda}]  = m [ J_{n, \lambda^{-1}}],   \;\;\;  for \; any \; m  \in \mathbb{Z}. 
\end{equation*}
\end{theorem}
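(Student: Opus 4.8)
\emph{Proof strategy.} The plan is to reduce everything to the classical conjugacy theory of complex matrices. Since the filtration is trivial, $\mathbb{C}_\mu = \mathbb{C}$ for every $\mu$, the projective limit over $\mu$ is taken over a constant inverse system and hence is redundant, and local conjugation $\sim_l$ coincides with ordinary conjugation by elements of $\mathbb{GL}_n(\mathbb{C})$. Thus $K_0^{loc}(\mathbb{C})$ is the Grothendieck group of the monoid of idempotent complex matrices modulo stabilisation and conjugation, while $K_1^{loc}(\mathbb{C})$ is the quotient of $\mathbb{GL}(\mathbb{C}) = \bigcup_n \mathbb{GL}_n(\mathbb{C})$ by the congruence generated by $\sim_s$, conjugation, and $\sim_{\mathbb{O}(\mathbb{C})}$.

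For -i) I would note that every idempotent of $M_n(\mathbb{C})$ is conjugate to a diagonal idempotent, hence is determined up to conjugation by its rank; stabilisation preserves the rank and $\oplus$ adds ranks, so the monoid is $\mathbb{N}$ and its Grothendieck completion is $\mathbb{Z}$.

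For -ii) I would first identify $\mathbb{GL}(\mathbb{C})/\sim_l$, with the operation $\oplus$, with the free commutative monoid on the symbols $[J_{n,\lambda}]$, $n \geq 1$, $\lambda \in \mathbb{C}\setminus\{0\}$, via the Jordan canonical form; since $1 = J_{1,1}$, the relation $\sim_s$ imposes precisely $[J_{1,1}] = 0$, leaving the free commutative monoid $M$ on $\{[J_{n,\lambda}]\}\setminus\{[J_{1,1}]\}$. The key step is to describe $\mathbb{O}(\mathbb{C})$ inside $M$, and here I would use the elementary fact that $J_{n,\lambda}^{-1} \sim_l J_{n,\lambda^{-1}}$ for $\lambda \neq 0$: writing $J_{n,\lambda} = \lambda\cdot 1 + N$ with $N$ the nilpotent shift, one gets $J_{n,\lambda}^{-1} = \lambda^{-1}\cdot 1 + M$ with $M$ nilpotent of rank $n-1$, hence conjugate to $N$, so $J_{n,\lambda}^{-1}$ is conjugate to $\lambda^{-1}\cdot 1 + N = J_{n,\lambda^{-1}}$. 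Consequently, for an arbitrary $u \sim_l \bigoplus_i J_{n_i,\lambda_i}$ the element $u \oplus u^{-1}$ equals $\sum_i\big([J_{n_i,\lambda_i}] + [J_{n_i,\lambda_i^{-1}}]\big)$ in $M$, so $\mathbb{O}(\mathbb{C})$ is generated by the elements $[J_{n,\lambda}] + [J_{n,\lambda^{-1}}]$ and $K_1^{loc}(\mathbb{C})$ is $M$ modulo the congruence generated by $[J_{n,\lambda}] + [J_{n,\lambda^{-1}}] \sim 0$. Since $[J_{n,\lambda^{-1}}]$ is then an additive inverse of $[J_{n,\lambda}]$, this quotient monoid is automatically a group, and the two displayed relations are just a restatement of $[J_{n,\lambda^{-1}}] = -[J_{n,\lambda}]$.

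To finish I would split the generators according to the orbits of the involution $\lambda \mapsto \lambda^{-1}$ on $\mathbb{C}\setminus\{0\}$: the fixed point $\lambda = 1$ contributes, for each $n \geq 2$ (the case $n = 1$ being already trivial), a generator with $2[J_{n,1}] = 0$, i.e. a copy of $\mathbb{Z}_2$; the fixed point $\lambda = -1$ contributes for each $n \geq 1$ a copy of $\mathbb{Z}_2$; and each two-element orbit, equivalently each $\hat{\lambda} \in \hat{\mathbb{C}}$, contributes for each $n \geq 1$ a copy of $\mathbb{Z}$ generated by $[J_{n,\lambda}]$. Because no relation mixes distinct values of $n$ or distinct orbits, $K_1^{loc}(\mathbb{C})$ is the direct sum of these pieces, which is the claimed formula. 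I expect the one genuinely delicate point to be the careful verification of $J_{n,\lambda}^{-1} \sim_l J_{n,\lambda^{-1}}$ — needed in order to describe $\mathbb{O}(\mathbb{C})$ purely through Jordan symbols — together with the bookkeeping that $\sim_s$ removes exactly the symbol $[J_{1,1}]$; everything else is routine manipulation of free commutative monoids and their congruences.
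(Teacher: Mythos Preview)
Your proposal is correct and follows essentially the same route as the paper: reduce to Jordan normal form, establish the key identity $J(J_{n,\lambda}^{-1}) = J_{n,\lambda^{-1}}$ (your rank argument is a terse version of the paper's second proof, Lemma~5), deduce that $\mathbb{O}(\mathbb{C})$ is generated by the elements $[J_{n,\lambda}]+[J_{n,\lambda^{-1}}]$ (the paper's Lemma~6), and then split according to the orbits of $\lambda\mapsto\lambda^{-1}$. The only notable stylistic difference is in the completeness step: where the paper (Lemma~7\,-ii) verifies by explicit coefficient bookkeeping that relations (31)--(34) suffice, you argue more structurally that the congruence on the free commutative monoid is generated by relations each supported on a single pair $(n,\hat\lambda)$, so the quotient decomposes as the stated direct sum; both arguments yield the same conclusion.
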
   %%%%%   Theorem  3   End
\par
The proof of this theorem gives a taste the algebraic structure behind the construction of $K_{0}^{loc}$ and $K_{1}^{loc}$.
\par
\begin{proof}  %%%%%   Proof Theorem   3     Begin
\par
-i) It is easy to compute $K_{0}^{loc}(\mathbb{C})$. 
Any two idempotent matrices in $\mathbb{M}_{n}(\mathbb{C})$ are conjugated if and only if they have the same rank; this could be seen, for example,  by using their Jordan canonical forms. Therefore
 \begin{equation}  %%%%   Eqn  (5)   Begin
 K_{0}^{loc}(\mathbb{C}) =   K_{0}(\mathbb{C}) = \mathbb{Z}.
 \end{equation}   %%%%   Eqn   (5)   End
\par
-ii) We pass now to compute $K_{1}^{loc}(\mathbb{C})$.  We will need a few lemmas. 
\par
In the computation of $K^{loc}_{1}$,  the Jordan canonical form of the matrix  $J_{n, \lambda}^{-1}$ plays a central role.
For this reason we present two proofs of the result, formula (18)-(19).
The first proof is based on explicit formulas; the second is more conceptual.
\par
 We begin by determining  the formula for $J_{n, \lambda}^{-1}$. 
\begin{definition}  %%%%%   Definition  4   Begin
Consider the idempotent matrix
\begin{equation}  %%%%%%   Eqn   (6)
M_{n} :=  J_{n, \lambda} - \lambda I =
\begin{pmatrix}
0 & 1 & 0 & 0 & ... & 0 & 0 \\
0 &  0 & 1 & 0 & ... & 0 & 0\\
... & ... &  ... & ...  & ...  & ...  & ... \\
0 & 0 & 0 & 0 & ... & 0 & 1\\
0 & 0 & 0 & 0 & ... & 0 & 0
\end{pmatrix}
.
\end{equation}   %%%%%%    Eqn  (6)  End
\end{definition}  %%%%%   Definition   4   End
%%%%%%%%%%%%%%%%%%%%%%%%
Then, for any  $1 \leqslant k \leqslant n-1$
\begin{equation}  %%%%%   Eqn  (7)  Begin
M_{n}^{\; k} =
\begin{pmatrix}
0 & ...& 0 & 1&0 ...  & 0 \\
0 &  ... & 0 & 0 & 1... & 0\\
... & ...   ... & ...  & ...  & ...  & ... \\
0 &  0 & 0 & ... & 0 & 1\\
... & ...   ... & ...  & ...  & ...  & ... \\
0 &  0 & 0 & ... & 0 & 0\\
0 & 0 & 0 & ... & 0 & 0
\end{pmatrix}
,
\end{equation}   %%%%%%    Eqn  (7)  End
i.e., in this matrix the entries of the $(k+1)^{th}$ diagonal, above the main diagonal, are equal to $1$ while all other entries are equal to $0$.   
\par
In consecutive powers of $M_{n}$ the diagonal of $1$'s moves towards the upper right end of the matrix. 
From this it follows that any distinct powers $k$,   $1 \leqslant k \leqslant n-1$,    of $M_{n}$ are linearly independent over 
$\mathbb{C}$.
For larger powers of this matrix one has
\begin{equation}  %%%%%%    Eqn  (8)  Begin
M_{n}^{\;k} = 0,  \;\;\;   n \leqslant k.
\end{equation}   %%%%%%    Eqn  (8)   End
\par
These well known formulas allow us to find $J(J_{n, \lambda}^{-1})$. Indeed,
\begin{gather}   %%%%%%    Eqn  (9)  Begin
I = I - (-\frac{1}{\lambda}M_{n})^{n} = (I - (\frac{-1}{\lambda}M_{n})) \sum_{0 \leqslant k \leqslant n-1}  (-\frac{1}{\lambda}M_{n}) ^{\;k} = 
\end{gather}   %%%%%%    Eqn  (9)   End
\begin{gather*}   %%%%%%    Eqn  ()  Begin
 = (\lambda J_{n, \lambda}) \sum_{0 \leqslant k \leqslant n-1}  (-\frac{1}{\lambda}M_{n}) ^{k},
\end{gather*}   %%%%%%    Eqn  ()   End
which shows

\begin{gather}    %%%%%%    Eqn  (10)  Begin
   (\lambda \;J_{n, \lambda})^{-1} \;=\;  \sum_{0 \leqslant k \leqslant n-1}  (-\frac{1}{\lambda}M_{n}) ^{k},
\end{gather}   %%%%%%    Eqn  (10)   End
or
\begin{gather}    %%%%%%    Eqn  (11)  Begin
   J_{n, \lambda}^{-1} \;=\;  \sum_{0 \leqslant k \leqslant n-1}  \frac{(-1)^{k}}{\lambda^{k+1}}M_{n}^{\; k}.
\end{gather}   %%%%%%    Eqn  (11)   End
Therefore, for any  $1 \leqslant l \leqslant n-1$

\begin{gather}    %%%%%%    Eqn  (12)  Begin
   (J_{n, \lambda}^{-1} - \frac{1}{\lambda} I)^{l }\;=\;  \; (\sum_{1 \leqslant k \leqslant n-1}  \frac{(-1)^{k}}{\lambda^{k+1}}M_{n}^{\; k}
   \;)^{l} \;=\;    
    \frac{(-1)^{l}}{\lambda^{2l}}M_{n}^{\; l} \;+\; higher  \; powers \; of  \; M_{n}.
\end{gather}   %%%%%%    Eqn  (12)   End
These show that ($2 \leq n$) 
\begin{equation}  %%%%%%    Eqn  (13)  Begin
1 \leqslant Rank \;  (J_{n, \lambda}^{-1} - \frac{1}{\lambda} I)^{l } \;  = l-1 \neq 0, \;\; \;  1 \leqslant l \leqslant n-1
\end{equation}  %%%%%%    Eqn  (13)   End
and
\begin{equation}   %%%%%%    Eqn  (14)  Begin
Rank \;  (J_{n, \lambda}^{-1} - \frac{1}{\lambda} I)^{n} \;  = 0.
\end{equation}   %%%%%%    Eqn  (14)  End
\par
All eigen-values of $J_{n, \lambda}$ are equal to $\lambda$.
The minimum power $k$ for which
\begin{equation}  %%%%%   Eqn   (15)    Begin
1 \leqslant Rank ( J_{n, \lambda} - \lambda .1_{n})^{k} 
\end{equation}  %%%%%   Eqn  (15)    End
\begin{equation}  %%%%%   Eqn   (16)    Begin
( J_{n, \lambda} - \lambda .1_{n})^{k+1} = 0
\end{equation}  %%%%%   Eqn  (16)    End
is $n-1$. For this reason, by definition,  the number $n$ will be called the {\em Jordan rank} of the matrix  $J_{n, \lambda}$, denoted
$\mathcal{JR}(J_{n, \lambda} )$. 
\par
Formulas (13), (14) show, on the other side, that 
\begin{equation}  %%%%   Eqn   (17)   Begin
\mathcal{JR}(J_{n, \lambda}^{-1}) = n = \mathcal{JR}(J_{n, \lambda})
\end{equation}  %%%%   Eqn   (17)   End
\par
Given that there is just one Jordan cell with given Jordan rank and spectrum, this relation says that
 \begin{equation}  %%%%%   Eqn   (18)    Begin
 J\; ( J_{n, \lambda}^{-1} ) = J_{n, \lambda^{-1}}. 
\end{equation}  %%%%%   Eqn  (18)    End
The next lemma states the same formula (18), but provides a different proof of it.
%%%%%   LEMMA 5
\begin{lemma}  %%%%%   Lemma   5     Begin
Let   $J_{n, \lambda} \in \mathbb{GL}(\mathbb{C})$, $1 \leq n$. Then
\begin{equation}  %%%%   Eqn  (19)   Begin
J\; ( J_{n, \lambda}^{-1} ) = J_{n, \lambda^{-1}}.
\end{equation}   %%%%   Eqn  (19)   End
\end{lemma}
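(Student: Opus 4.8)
The plan is to give the promised \emph{conceptual} argument, bypassing the explicit expansions (12)--(17). The structural fact I would rely on is the standard equivalence: a matrix $A\in\mathbb{GL}_{n}(\mathbb{C})$ is conjugate to a single Jordan cell of size $n$ if and only if it is non-derogatory, i.e.\ its minimal polynomial has degree $n$, equivalently the commutative subalgebra $\mathbb{C}[A]\subseteq\mathbb{M}_{n}(\mathbb{C})$ has dimension $n$, equivalently $\mathbb{C}^{n}$ is a cyclic module over $\mathbb{C}[A]$. I would first recall this: for $A=J_{n,\mu}$ the vector $e_{n}$ is cyclic, since $(A-\mu I)^{k}e_{n}=e_{n-k}$ for $0\leqslant k\leqslant n-1$ (this is formula (7) read on the last basis vector); the converse is the structure theorem for finitely generated modules over the principal ideal domain $\mathbb{C}[x]$.

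Next I would observe that, $J_{n,\lambda}$ being non-derogatory, $\mathbb{C}[J_{n,\lambda}]\cong\mathbb{C}[x]/\bigl((x-\lambda)^{n}\bigr)$, a local Artinian ring in which the class of $x$ — hence $J_{n,\lambda}$ — is a unit. Since this algebra is finite dimensional, the powers of $J_{n,\lambda}^{-1}$ satisfy a linear dependence relation, and solving it for its lowest-degree term exhibits $J_{n,\lambda}^{-1}$ as a polynomial in $J_{n,\lambda}$; thus $\mathbb{C}[J_{n,\lambda}^{-1}]\subseteq\mathbb{C}[J_{n,\lambda}]$. Applying the identical remark to $J_{n,\lambda}^{-1}$, whose inverse is $J_{n,\lambda}$, gives the reverse inclusion, so $\mathbb{C}[J_{n,\lambda}^{-1}]=\mathbb{C}[J_{n,\lambda}]$, a subalgebra of dimension $n$. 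Hence $J_{n,\lambda}^{-1}$ is non-derogatory as well, and its Jordan canonical form is a single Jordan cell of size $n$.

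It remains to identify the eigenvalue of that cell: if $J_{n,\lambda}v=\lambda v$ then $J_{n,\lambda}^{-1}v=\lambda^{-1}v$, and as $\lambda$ is the only eigenvalue of $J_{n,\lambda}$ the map $\mu\mapsto\mu^{-1}$ shows $\lambda^{-1}$ is the only eigenvalue of $J_{n,\lambda}^{-1}$. A single Jordan cell of size $n$ with unique eigenvalue $\lambda^{-1}$ is by definition $J_{n,\lambda^{-1}}$, which is (19). The case $n=1$ is the trivial identity $J_{1,\lambda}^{-1}=\lambda^{-1}=J_{1,\lambda^{-1}}$.

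I do not anticipate a real obstacle. The only point that deserves a line of care is the claim that the inverse of the unit $J_{n,\lambda}$ lies in $\mathbb{C}[J_{n,\lambda}]$ — which is exactly what finite dimensionality of $\mathbb{C}[J_{n,\lambda}]$ guarantees, and which one could also make explicit via formula (12), although the merit of this second proof is that only the \emph{existence} of such a polynomial, not its shape, is used.
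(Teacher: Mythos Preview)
Your argument is correct, with one small caveat: the equivalence ``conjugate to a single Jordan cell of size $n$ $\Leftrightarrow$ non-derogatory'' is too strong as stated (e.g.\ $\mathrm{diag}(1,2)$ is non-derogatory with two blocks); cyclicity alone yields only one block \emph{per eigenvalue}. This does no damage to your proof, since you separately establish that $\lambda^{-1}$ is the unique eigenvalue, and together with $\dim\mathbb{C}[J_{n,\lambda}^{-1}]=n$ this forces the minimal polynomial to be $(x-\lambda^{-1})^{n}$ and hence a single block.

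The route, however, is genuinely different from the paper's. The paper factors $(J_{n,\lambda}-\lambda I)^{k}=J_{n,\lambda}^{\,k}(I-\lambda J_{n,\lambda}^{-1})^{k}$ and observes that left multiplication by the invertible $J_{n,\lambda}^{\,k}$ preserves rank, so $\mathrm{Rank}\,(J_{n,\lambda}^{-1}-\lambda^{-1}I)^{k}=\mathrm{Rank}\,(J_{n,\lambda}-\lambda I)^{k}$ for every $k$; this transfers the Jordan-rank data directly from $J_{n,\lambda}$ to its inverse. That argument is entirely elementary---rank and invertibility only---and stays within the paper's own ``Jordan rank'' framework. Your approach instead passes through the invariant $\mathbb{C}[A]=\mathbb{C}[A^{-1}]$; it invokes a bit more structure (the non-derogatory characterisation via the polynomial algebra) but makes the preservation of block size conceptually transparent and generalises at once, for instance to $J(f(J_{n,\lambda}))=J_{n,f(\lambda)}$ for any polynomial $f$ with $f'(\lambda)\neq 0$.
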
   %%%%%%    Lemma   5    End
%%%%%%%%%%%%%%%%%%%%%%%%%%%%%%%%%%%%%%%%%%%%%
\begin{proof}   %%%%%%%    Proof  Lemma  5   Begin
This a second proof of (18) which does not use the  explicit formula for $J_{n, \lambda}^{-1}$ discussed above. 
As $J_{n, \lambda} \in \mathbb{GL}(\mathbb{M}(\mathbb{C}))$, $ \lambda \neq 0$.
\par
The matrix $J_{n, \lambda} $  is invertible. We get, for any $1 \leqslant k \leq n-1$
\begin{equation}  %%%%%   Eqn   (19)    Begin
  1 \leqslant  Rank (J_{n, \lambda} - \lambda 1_{n})^{k}  =  Rank [\; J_{n, \lambda}^{k} . ( 1_{n} - \lambda .J_{n, \lambda}^{-1})^{k}\; ]  =
  Rank ( 1_{n} - \lambda .J_{n, \lambda}^{-1})^{k} =  Rank ( \frac{1}{\lambda}1_{n} -  J_{n, \lambda}^{-1})^{k}
\end{equation}  %%%%%   Eqn  (19)    End
\begin{equation}  %%%%%   Eqn   (20)    Begin
  0 =  Rank (J_{n, \lambda} - \lambda 1_{n})^{n}  =  Rank [\; J_{n, \lambda}^{n} . ( 1_{n} - \lambda .J_{n, \lambda}^{-1})^{n}\; ]  =
  Rank ( 1_{n} - \lambda .J_{n, \lambda}^{-1})^{n} =  Rank ( \frac{1}{\lambda}1_{n} -  J_{n, \lambda}^{-1})^{n}
\end{equation}  %%%%%   Eqn  (20)    End
\par
The spectrum of $J_{n, \lambda}^{-1}$ consists of the sole value $\frac{1}{\lambda}$. Formulas (20) and (21) say that 
\begin{equation}  %%%%%   Eqn   (21)    Begin
\mathcal{JR} (J_{n, \lambda}^{-1}) =  n = \mathcal{JR} (J_{n, \lambda}).
\end{equation}  %%%%%   Eqn  (21)    End
\par
Given that there is just one Jordan cell with given Jordan rank and spectrum, this relation proves formula (19)
 \begin{equation*}  %%%%%   Eqn   (22)    Begin
 J\; ( J_{n, \lambda}^{-1} ) = J_{n, \lambda^{-1}}. 
\end{equation*}  %%%%%   Eqn  (22)    End
\end{proof}   %%%%%%%       Proof   Lemma  4  END
%%%%%%%%%%%%%%%%%%%%%%%%%%%%%%%%%%
%%%%%%%%%%%%%%%%%%%%%%%%%%%%%%%%%%%
\par
\begin{lemma}   %%%%%%   Lemma   6       Begin
Let $ U =  (A \oplus A^{-1}) \in \mathbb{O}_{2n}(\mathbb{C})$, any  $1 \leqslant n$.
\par
-i) The Jordan canonical form for of $U$ is 
\begin{equation}  %%%%%%   Eqn     (23)     Begin
U \sim_{l} J(U) =
  \oplus_{\sum n_{i}=n} ( \; J_{n_{i}, \lambda_{i}} \oplus J_{ n_{i}, \; \lambda_{i}^{-1} }\; ) =
   \sum_{\sum n_{i}=n} ( \; J_{n_{i}, \lambda_{i}} +  J_{ n_{i}, \; \lambda_{i}^{-1} }\; ) 
\end{equation}  %%%%%%   Eqn    (23)    End
with
\begin{equation}  %%%%%%   Eqn     (24)     Begin
( \; J_{n_{i}, \lambda_{i}} +   J_{n_{i}\;, \;{\lambda_{i}}^{-1}}\; ) \in \mathbb{O}_{2n}(\mathbb{C}).
\end{equation} %%%%%%   Eqn    (24)    End
\par
-ii) 
\begin{equation}  %%%%%%   Eqn     (25)     Begin
 [J_{n_{i}\;, \;{\lambda_{i}}^{-1}} ] = - [ J_{n_{i}, \lambda_{i}} ]  \in K_{1}^{loc} (\mathbb{C}).
\end{equation}  %%%%%%   Eqn    (25)    End
\end{lemma}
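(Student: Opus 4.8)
The plan is to reduce the whole statement to the one-block identity (19), $J(J_{n, \lambda}^{-1}) = J_{n, \lambda^{-1}}$. Begin by putting $A \in \mathbb{GL}_n(\mathbb{C})$ into Jordan form: choose $P$ with $P A P^{-1} = J(A) = \oplus_i J_{n_i, \lambda_i}$, $\sum_i n_i = n$, and every $\lambda_i \neq 0$ because $A$ is invertible. Conjugating $A^{-1}$ by the same $P$ gives $A^{-1} \sim_l J(A)^{-1} = \oplus_i J_{n_i, \lambda_i}^{-1}$, and (19) applied in each block yields $\oplus_i J_{n_i, \lambda_i}^{-1} \sim_l \oplus_i J_{n_i, \lambda_i^{-1}}$, i.e. $J(A^{-1}) = \oplus_i J_{n_i, \lambda_i^{-1}}$. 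Consequently $U = A \oplus A^{-1}$ is locally conjugate to $(\oplus_i J_{n_i, \lambda_i}) \oplus (\oplus_i J_{n_i, \lambda_i^{-1}})$, and permuting the Jordan blocks — a conjugation — rearranges this as $\oplus_i (J_{n_i, \lambda_i} \oplus J_{n_i, \lambda_i^{-1}})$. This is formula (23).

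For (24) I would undo, within the $i$-th summand, the replacement of $J_{n_i, \lambda_i}^{-1}$ by $J_{n_i, \lambda_i^{-1}}$: by (19) the two blocks are conjugate, and conjugating only inside that block (the identity elsewhere) is a local conjugation, so
\begin{equation*}
J_{n_i, \lambda_i} \oplus J_{n_i, \lambda_i^{-1}} \;\sim_l\; J_{n_i, \lambda_i} \oplus J_{n_i, \lambda_i}^{-1} \;=\; u \oplus u^{-1}, \qquad u := J_{n_i, \lambda_i} \in \mathbb{GL}_{n_i}(\mathbb{C}) .
\end{equation*}
The right-hand side is, by definition, a generator of $\mathbb{O}_{2 n_i}(\mathbb{C})$; since $\mathbb{O}$ is a submodule which is stable under direct sums and under local conjugation, each summand — and hence $J(U)$ — belongs to $\mathbb{O}_{2n}(\mathbb{C})$, which is (24).

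Part -ii) then follows from the way $K_1^{loc}(\mathbb{C})$ is defined. The relation $\sim_{\mathbb{O}(\mathbb{C})}$ annihilates every generator of $\mathbb{O}$, so $[J_{n_i, \lambda_i}] + [J_{n_i, \lambda_i}^{-1}] = [\, J_{n_i, \lambda_i} \oplus J_{n_i, \lambda_i}^{-1}\,] = 0$ in $K_1^{loc}(\mathbb{C})$; on the other hand $[J_{n_i, \lambda_i}^{-1}] = [J(J_{n_i, \lambda_i}^{-1})] = [J_{n_i, \lambda_i^{-1}}]$ by local conjugation together with (19). Combining these gives $[J_{n_i, \lambda_i^{-1}}] = -[J_{n_i, \lambda_i}]$, i.e. (25).

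Apart from standard Jordan theory, the whole argument rests on one bookkeeping point: that, for $\mathbb{C}$ with the trivial filtration, \emph{local} conjugation in the sense of \cite{Teleman_arXiv_IV} is rich enough to realise the Jordan reduction and, in particular, that permuting Jordan blocks and conjugating inside a single block are admissible $\sim_l$-moves which carry the submodule $\mathbb{O}$ into itself. This compatibility — built into the set-up of \cite{Teleman_arXiv_IV} — is the only real obstacle; granting it, the proof is as sketched.
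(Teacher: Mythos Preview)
Your proof is correct and follows essentially the same route as the paper's: bring $A$ to Jordan form, apply Lemma~5 (formula (19)) blockwise to identify $J(A^{-1})$, pair the blocks, and then read off part~-ii) from the fact that each pair $J_{n_i,\lambda_i}\oplus J_{n_i,\lambda_i}^{-1}$ is a generator of $\mathbb{O}$. The paper makes explicit the same caveat you flag at the end---that the trivial filtration on $\mathbb{C}$ makes every inner automorphism an admissible $\sim_l$-move---so your one ``bookkeeping point'' is exactly the justification the paper invokes.
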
   %%%%%%   Lemma   5    End
%%%%%%%%%%%%%%%%%%%%%%%%%%%%%%%%%%%%%%%%%
\begin{proof}  %%%%%%%%%   Proof  Lemma 5     Begin
\par  A consequence of Lemma 5 is that the equivalence relation  $\sim_{\mathbb{O}}$ could produce relations between the Jordan canonical forms with eigen-values $\lambda$ and  $\lambda^{-1}$. Inner auto-morphisms (and therefore the equivalence relation $\sim_{l}$)  do not mix up Jordan canonical forms with different values of  the class $\hat{\lambda}$. Therefore, the only relations between generators of $K_{1}^{loc}(\mathbb{C})$  could occur within the Jordan canonical forms with {\em eigen-values within the same class}  $\hat{\lambda}$, or {\em separately, within} the  Jordan canonical forms with eigen-values $-1$ and $+1$. These cases  will be discussed separately.

-i)
Recall that the filtration of $\mathbb{C}$ is trivial. This means any inner auto-morhism in $\mathbb{GL}(\mathbb{M}(\mathbb{C}))$ is allowed and there is no limitation on the number of products.
\par
The matrix $U$ is the direct sum of the matrices $A$ and $A^{-1}$.  
Although we would be allowed to use any inner auto-morphism of $\mathbb{C}^{n} \oplus \mathbb{C}^{n}$
to bring the matrix $U$ to its Jordan canonical form, a direct sum of such inner auto-morphisms acting separately onto the two terms will produce the result.
\par
We use an inner auto-morphisms to bring the matrix  $A$ to its Jordan canonical form
\begin{equation}  %%%%%%  Eqn   (26)   Begin
A \sim_{l} \oplus_{\sum n_{i}=n}  \; J_{n_{i}, \lambda_{i}}.
\end{equation}   %%%%%%   Eqn   (26)   End
We use this decomposition to replace both $A$ and $A^{-1}$ in the expression of $U$ and then we use Lemma 5  to get
\begin{equation}   %%%%%%  Eqn   (27)   Begin
U = A \oplus A^{-1} \sim_{l} \oplus_{\sum n_{i}=n}  \; (\; J_{n_{i}, \lambda_{i}} \oplus   \; J_{n_{i}, \lambda_{i}}^{-1}\;)
\sim_{l} \oplus_{\sum n_{i}=n}  \; (\; J_{n_{i}, \lambda_{i}} \oplus   \; J_{n_{i}, \lambda_{i}^{-1}}\;)
\end{equation} %%%%%%   Eqn   (27)   End
in which, any term $ J_{n_{i}, \lambda_{i}} \oplus   \; J_{n_{i}, \lambda_{i}^{-1}}  \sim_{l} 
\; J_{n_{i}, \lambda_{i}} \; +   \; J_{n_{i}, \lambda_{i}}^{-1}\;
\in \mathbb{O}_{2n}(\mathbb{C})$.
\par
The uniqueness part of the Jordan canonical form theorem assures that the decomposition  (27) is precisely (except for permutation of the pairs in the direct sum) the Jordan canonical form of $U$
\begin{equation}  %%%%   Eqn  (28)  Begin
J(U) = \oplus_{\sum n_{i}=n}  \; (\; J_{n_{i}, \lambda_{i}} \oplus   \; J_{n_{i}, \lambda_{i}^{-1}}\;).
\end{equation}   %%%%   Eqn  (28)   End
\par
Therefore, any element $A \oplus A^{-1} \in \mathbb{O}_{2n}(\mathbb{C})$ 
splits-up in a direct sum of elements in $\mathbb{O}_{2n}(\mathbb{C}^{n})$; each of these terms represents the zero element in $K_{1}^{loc}(\mathbb{C})$.
\par
-ii) As  $(\; J_{n_{i}, \lambda_{i}} \oplus   \; J_{n_{i}, \lambda_{i}}^{-1}\;) \in \mathbb{O}_{2n}(\mathbb{C})$, one has
\begin{equation}  %%%%%   Eqn   (29)    Begin
 0 = [\; J_{n_{i}, \lambda_{i}} \oplus   \; J_{n_{i}, \lambda_{i}}^{-1}\;] = [  J_{n_{i}, \lambda_{i}}] +  [  J_{n_{i}, \lambda_{i}^{-1}}]
 \in K_{1}^{loc}(\mathbb{C}) .
\end{equation}   %%%%%    Eqn   (29)   End
Therefore
%%%%%%%%%%%%%%   Proof  Lemma 5    End
\begin{equation}  %%%%%%   Eqn  (30)   Begin
  [\; J_{n_{i}, \lambda_{i}^{-1}}\;] = - [ \; J_{n_{i}, \lambda_{i}}]  \in K_{1}^{loc}(\mathbb{C}).
\end{equation}  %%%%%%   Eqn   (30)   End
\end{proof}  
%%%%%%%%%%%%%%   Proof  Lemma  6   End
%%%%%%%%%%%%%%%%%%%%%%%%%%%%%%%%%%%%%%%%%%%%%%%%%%%%
\begin{lemma}  %%%%%%%%%%%%%%%    Lemma  7      Begin
-i) The Jordan cells are generators of $K_{1}^{loc}(\mathbb{C})$ subject to the following relations   
\par
\begin{equation}  %%%%  Eqn  (31)  Begin
[J_{1, 1}]  = 0 %%%  
\end{equation}  %%%%  Eqn  (31) End
\par
\begin{equation}  %%%%  Eqn  (32)  Begin
2 \; [J_{n, -1}]  = 0 \;\; for \; any  \;  1 \leqslant n \in \mathbb{N} %%%%  
\end{equation}  %%%%  Eqn  (32) End
\par
\begin{equation}  %%%%  Eqn  (33)  Begin
2 \; [J_{n, 1}]  = 0   \;\; for \; any  \;  2 \leqslant n \in \mathbb{N}
\end{equation}  %%%%  Eqn  (33) End
\par
\begin{equation}  %%%%  Eqn  (34)  Begin
[J_{n\;, \;\lambda}]    +  [J_{n\;, \; \lambda^{-1}}] = 0    \;\; for \; any  \;  1 \leqslant n \in \mathbb{N} \;\; and  \;\; \lambda \neq -1, 1.
\end{equation}  %%%%%   Eqn  (34)   End
\par
-ii) In $K_{1}^{loc}(\mathbb{C})$ there no other relations between the classes of Jordan cells besides (31) - (34). 
\end{lemma}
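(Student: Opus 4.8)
\emph{Proof strategy.} Part (i) is immediate: $J_{1,1} = 1_{1}$ is the unit of $\mathbb{GL}_{1}(\mathbb{C})$, so $J_{1,1} \sim_{s} \emptyset$ and $[J_{1,1}] = 0$, which is (31); and for any $n$ and $\lambda \neq 0$ one has $J_{n,\lambda} \oplus J_{n,\lambda}^{-1} \in \mathbb{O}_{2n}(\mathbb{C})$, so $[J_{n,\lambda}] + [J_{n,\lambda}^{-1}] = 0$ in $K_{1}^{loc}(\mathbb{C})$, and since Lemma 5 gives $J(J_{n,\lambda}^{-1}) = J_{n,\lambda^{-1}}$, conjugation turns this into $[J_{n,\lambda}] + [J_{n,\lambda^{-1}}] = 0$; specialising $\lambda = -1$ yields (32), $\lambda = 1$ yields $2[J_{n,1}] = 0$, i.e. (33) for $n \geq 2$ (and a consequence of (31) for $n = 1$), and $\lambda \neq \pm 1$ yields (34). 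Equivalently, this is Lemma 6(ii). The substance of the lemma is part (ii).

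For (ii) the plan is to exhibit $K_{1}^{loc}(\mathbb{C})$ as a commutative monoid with generators the classes $[J_{n,\lambda}]$ and defining relations exactly (31)--(34). Since the filtration of $\mathbb{C}$ is trivial, $K_{1}^{loc}(\mathbb{C}) = K_{1}(\mathbb{C})$ is the quotient of the commutative monoid $(\mathbb{GL}(\mathbb{C}),\oplus)$ by the equivalence relation generated by $\sim_{s}$, $\sim_{l}$ (which here is ordinary conjugation) and $\sim_{\mathbb{O}(\mathbb{C})}$. First I would factor out $\sim_{s}$ and $\sim_{l}$ alone: by the existence of the Jordan canonical form every conjugacy class is a direct sum of cells $J_{n,\lambda}$, by the uniqueness part of the Jordan theorem (already used for Lemma 6) the multiset of cells is a complete invariant of the class and a composite of conjugations is again a conjugation, while $\sim_{s}$ merely inserts or removes copies of $1_{1} = J_{1,1}$. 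Hence $\mathbb{GL}(\mathbb{C})/(\sim_{s},\sim_{l})$ is canonically the \emph{free} commutative monoid $M$ on the classes $[J_{n,\lambda}]$ with $(n,\lambda) \neq (1,1)$; equivalently, it is the free commutative monoid on all $[J_{n,\lambda}]$ modulo the single relation (31). In particular $\sim_{s}$ and $\sim_{l}$ impose no relation among Jordan cells other than (31).

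Next I would add $\sim_{\mathbb{O}(\mathbb{C})}$. By Lemma 6(i) every generator $u \oplus u^{-1}$ of $\mathbb{O}(\mathbb{C})$ is $\sim_{l}$-equivalent to $\bigoplus_{i}(J_{n_{i},\lambda_{i}} \oplus J_{n_{i},\lambda_{i}^{-1}})$, so, invoking Lemma 5 once more, its image in $M$ is $\sum_{i}\bigl([J_{n_{i},\lambda_{i}}] + [J_{n_{i},\lambda_{i}^{-1}}]\bigr)$. Therefore the image of $\mathbb{O}(\mathbb{C})$ in $M$ is the submonoid generated by the elements $g_{n,\lambda} := [J_{n,\lambda}] + [J_{n,\lambda^{-1}}]$ ($n \geq 1$, $\lambda \in \mathbb{C}^{\ast}$), and the further quotient $M/\sim_{\mathbb{O}(\mathbb{C})}$ is obtained by imposing on $M$ exactly the relations $g_{n,\lambda} = 0$, that is, exactly (32), (33) and (34). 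Thus $K_{1}^{loc}(\mathbb{C})$ is the commutative monoid presented by the generators $[J_{n,\lambda}]$ and the relations (31)--(34), with no others, which is statement (ii). To recover the explicit decomposition of Theorem 3 and to see that this monoid is already a group, I would finally note that each of (31)--(34) involves only cells with a fixed $n$ and with $\lambda$ in a single $\sim_{r}$-orbit; since the pairs $(n,\hat{\lambda})$ partition the generating set, the quotient splits as the restricted direct sum of the one- or two-generator quotients $\langle e \mid 2e = 0 \rangle \cong \mathbb{Z}_{2}$ (for $\hat{\lambda}$ the class of $-1$ with $n \geq 1$, or of $1$ with $n \geq 2$; while $[J_{1,1}] = 0$) and $\langle a, b \mid a + b = 0 \rangle \cong \mathbb{Z}$ (for $\hat{\lambda} \in \hat{\mathbb{C}}$, via $a \mapsto 1$, $b \mapsto -1$, which is precisely the group law displayed in Theorem 3).

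I expect the main obstacle to be this first reduction, namely identifying $\mathbb{GL}(\mathbb{C})/(\sim_{s},\sim_{l})$ with the free commutative monoid on the Jordan cells: it rests on the uniqueness of the Jordan canonical form and on the remark that conjugating a direct sum of cells cannot produce one with a different multiset of cells, and then on the elementary but essential fact that a congruence on a free commutative monoid whose defining relations are each supported on a single block of a partition of the generating set yields a direct-sum decomposition of the quotient. After that the argument is pure bookkeeping with Lemmas 5 and 6.
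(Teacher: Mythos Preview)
Your proposal is correct and follows essentially the same route as the paper: both use the uniqueness of the Jordan canonical form to identify $\mathbb{GL}(\mathbb{C})/(\sim_{s},\sim_{l})$ with the free commutative monoid on Jordan cells modulo $[J_{1,1}]=0$, and then Lemmas~5 and~6 to identify the image of $\mathbb{O}(\mathbb{C})$ in that monoid and hence the remaining relations. The paper phrases this as a multiplicity-counting verification (starting from a generic equivalence $(u_{1}+\xi_{1})\sim_{l}(u_{2}+\xi_{2})$ and checking that the implied relation between $[u_{1}]$ and $[u_{2}]$ is derivable from (31)--(34), splitting into the cases $\lambda\neq\lambda^{-1}$ and $\lambda=\lambda^{-1}$), whereas you phrase it as a direct computation of a monoid presentation, but the mathematical content is the same.
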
  
%%%%%%%%%%%%%%%   Lemma  6   End
%%%%%%%%%%%%%%%%%%%%%%%%%%%%%%%%%%%%%%%%%%%%%%%%%%%%
\begin{proof}  %%%%%%%%%%%%%%   Begin  Proof  Lemma 6
\par
-i) The first relation (31) is a consequence of the stabilisation equivalence relation
\begin{equation}  %%%%%  Eqn   (35)  Begin 
A \sim_{s} A + 1_{1}   \;\; implies  \;\; [A] = [A + 1_{1}] = [A] + [1_{1}]   \in K_{1}^{loc}(\mathbb{C}).
\end{equation} %%%%%  Eqn   (35)  End
\par 
Formulas (32) and (33) are consequences of formula (30) for the only two values of $\lambda$ which satisfy the condition
$\lambda = \lambda^{-1.}$
\par
Formula (34) is formula (30) in the remaining cases.
\par
-ii) In what follows we show that between the classes of the Jordan canonical forms, in  $K_{1}^{loc}(\mathbb{C})$,  there are no other relations  besides the relations (31) - (34).
\par
Any generator $A$ of $K_{1}^{loc}(\mathbb{C})$,  i.e. invertible matrix, 
decomposes in a direct sum of invertible Jordan cells; this decomposition is unique modulo a permutation of them.
\par
The construction of $K^{loc}_{1}$, see Definitions 19 and 20 of \cite{Teleman_arXiv_IV},
say that any element of $K_{1}^{loc}(\mathbb{C})$ is represented by $[u]$, where $u \in \mathbb{GL}_{n}(\mathbb{C})$.
We start with two generic generators in $K_{1}^{loc}(\mathbb{C})$ which represent the same element in this group and we look for all relations which will deem necessary at the level of $K_{1}^{loc}(\mathbb{C})$. Suppose then
\begin{equation}  %%%%%   Eqn   (36)   Begin
(u_{1} + \xi_{1})  \sim_{l} (u_{2} + \xi_{2})  \;\; with  \;\; \xi_{1}, \xi_{2} \in \mathbb{O}_{2n}(\mathbb{C}).
\end{equation}  %%%%%   Eqn   (36)   End
(in this formula we have assumed that both sides of the equation were sufficiently stabilised)
We get the {\em necessary relations} between the involved Jordan canonical forms 
\begin{equation}   %%%%%   Eqn   (37)   Begin
[u_{1}] = [u_{2}] \in K_{1}^{loc}(\mathbb{C}) \;\;  and \;\; [\xi_{1}] = [\xi_{2}] = 0.
\end{equation}  %%%%%   Eqn   (37)   End
%%%%%%%%%%%%%%%%%%%%%%%%%%%%%%%%%%%%%%%%%%%%%%
We will show that relations (31) - (34) of this lemma imply formula (37).
\par
Indeed, the equation (36) says that at the level of Jordan canonical forms of both sides  
\begin{equation}  %%%%%   Eqn   (38)   Begin
J (u_{1} + \xi_{1})  = J (u_{2} + \xi_{2}).
\end{equation}  %%%%%   Eqn   (38)   End
\par
The discussion made in the proof of  Lemma  6. -i) shows that each of the four elements of the equation (36) may be individually
brought to its canonical Jordan form.   The uniqueness part of the Jordan canonical form theorem tells that
\begin{equation}  %%%%%   Eqn   (39)   Begin
J (u_{1}) \cup J(\xi_{1})  = J (u_{2}) \cup J(\xi_{2}).
\end{equation}  %%%%%   Eqn   (39)   End
Both sides of this formula have to be seen as consisting of the union of the Jordan cells, each cell counted with its multiplicity;
 i.e.  the two sides of the equation (38) (39)  consist precisely of the same Jordan cells counted with their multiplicities.
\par
We recall, Definition 2,  that $\hat{\mathbb{C}} := \mathbb{C} \setminus \{ -1, 0, 1 \} / \sim_{r}$ where $\sim_{r}$ is the equivalence relation 
$ \lambda \sim_{r} \lambda^{'}$ if and only if  $\lambda = \lambda^{'}$ or  $\lambda^{-1} = \lambda^{'}$.
  Let  $\hat{\lambda}$ be the equivalence class of $\lambda$ in
$ \hat{\mathbb{C}}$.
\par
We decompose the set of Jordan cells appearing in the formula (39) in disjoint groups $J_{n_{i, \lambda}}$, according to the  size $n_{i}$ and eigen-value $\lambda \in \mathbb{C}$
\begin{gather}  %%%%%%%%   Eqn  (40)   Begin
J_{n_{i}, \lambda} (u_{1}) := 
 \{ \text{ Jordan cells of $u_{1}$ of size $n_{i}$ and eigen-value  $\lambda$ } \}
\end{gather}  %%%%%%%    Eqn  (40)  End

\begin{gather}  %%%%%%%%   Eqn  (41)   Begin
J_{n_{i}, \lambda} (\xi_{1}) := 
 \{
\text{ 
Jordan cells of $\xi_{1}$ of size $n_{i}$ and eigen-value $\lambda$ 
}
\}
\end{gather}  %%%%%%%    Eqn  (41)

\begin{gather}   %%%%%%   Eqn  (42)   Begin
J_{n_{i}, \lambda} (u_{2}) := 
 \{
\text{ 
Jordan cells of $u_{2}$ of size $n_{i}$ and eigen-value $\lambda$ 
}
\}. 
\end{gather}  %%%%%   Eqn   (42)   End

\begin{gather}  %%%%%   Eqn  (43)   Begin
J_{n_{i}, \lambda} (\xi_{2}) := 
 \{
\text{ 
Jordan cells of $\xi_{2}$ of size $n_{i}$ and eigen-value  $\lambda$ 
}
\}
\end{gather}  %%%%%   Eqn   (43)   End
\par
We will analyse first the case in which the class $\hat{\lambda} \in \hat{\mathbb{C}}$ contains to {\em distinct} elements $\lambda_{0}$
and $\lambda_{0}^{-1}$.
\par
Let $a_{n_{i}, \lambda_{0}}(u_{\alpha})$, resp. $a_{n_{i}, \lambda_{0}^{-1}}(u_{\alpha})$, represent the number of times the Jordan cell  $J_{n_{i}, \lambda_{0}}$, resp. $J_{n_{i}, \lambda_{0}^{-1}}$, appears in $J_{n_{i}, \hat{\lambda}}(u_{\alpha})$, $\alpha = 1, 2$.
\par
Analogously, let
$b_{n_{i}, \lambda_{0}}(\xi_{\alpha})$, resp. $b_{n_{i}, \lambda_{0}^{-1}}(\xi_{\alpha})$, be the number of times the Jordan cell  $J_{n_{i}, \lambda_{0}}$, resp. $J_{n_{i}, \lambda_{0}^{-1}}$, appears in $J_{n_{i}, \hat{\lambda}}(\xi_{\alpha})$, $\alpha = 1, 2$. 
\par
The formula (39) gives,  for the chosen size and eigen-values
\begin{equation} %%%%%%   Eqn   (44)   Begin
( a_{n_{i}, \lambda_{0}}(u_{1})+ b_{n_{i}, \lambda_{0}}(\xi_{1})  )  J_{n_{i}, \lambda_{0}} =
( a_{n_{i}, \lambda_{0}}(u_{2})+ b_{n_{i}, \lambda_{0}}(\xi_{2})  )  J_{n_{i}, \lambda_{0}} 
\end{equation}  %%%%%%   Eqn   (44)  End
\begin{equation}  %%%%%   Eqn   (45)  Begin
( a_{n_{i}, \lambda_{0}^{-1}}(u_{1})+ b_{n_{i}, \lambda_{0}^{-1}}(\xi_{1})  )  J_{n_{i}, \lambda_{0}^{-1}} =
( a_{n_{i}, \lambda_{0}^{-1}}(u_{2})+ b_{n_{i}, \lambda_{0}^{-1}}(\xi_{2})  )  J_{n_{i}, \lambda_{0}^{-1}} 
\end{equation}  %%%%%   Eqn   (45)  End
%%%%%%%%%%%%%%%%%%%%%%%%%%%%%%%%%%%%%%%%%%%%%%%%%%%%
We extract the coefficients from the formulas (44) and (45); we obtain the information
\begin{equation} %%%%%%   Eqn   (46)   Begin
( a_{n_{i}, \lambda_{0}}(u_{1})+ b_{n_{i}, \lambda_{0}}(\xi_{1})  )  =
( a_{n_{i}, \lambda_{0}}(u_{2})+ b_{n_{i}, \lambda_{0}}(\xi_{2})  )   
\end{equation}  %%%%%%   Eqn   (46)  End
\begin{equation}  %%%%%   Eqn   (47)  Begin
( a_{n_{i}, \lambda_{0}^{-1}}(u_{1})+ b_{n_{i}, \lambda_{0}^{-1}}(\xi_{1})  )  =
( a_{n_{i}, \lambda_{0}^{-1}}(u_{2})+ b_{n_{i}, \lambda_{0}^{-1}}(\xi_{2})  )  
\end{equation}  %%%%%   Eqn   (47)  End

%%%%%%%%%%%%%%%%%%%%%%%%%%%%%%%%%%%%%%%%%%%%%%%%%%%%%
\par
On the other side, formulas  (23),  (24), (25)   give us the second group of relations between the coefficients  
\begin{gather}  %%%%%   Eqn   (48)-(49)Begin
b_{n_{i}, \lambda_{0}}(\xi_{1})  = b_{n_{i}, \lambda_{0}^{-1}}(\xi_{1}) \\
b_{n_{i}, \lambda_{0}^{-1}}(\xi_{2})  = b_{n_{i}, \lambda_{0}^{-1}}(\xi_{2}).
\end{gather}  %%%%   Eqn   (48)-(49)   End
%%%%%%%%%%%%%%%%%%%%%%%%%%%%%%%%%%%%%%%%%
The equations (44)-(46), (45)-(47) and  (48), (49) describe the Jordan composition of the the conjugate elements
$\tilde{u}_{1} := (u_{1} + \xi_{1}), \; \tilde{u}_{2} := (u_{2} + \xi_{2})$. 
\par
  We pass to the $K_{1}^{loc}$ classes of these invertible elements appearing in the formulas (44) - (45). We get
    
\begin{equation}  %%%%%%   Eqn  (50)   Begin
(a_{n_{i}, \lambda_{0}}(u_{1}) + b_{n_{i}, \lambda_{0}}(\xi_{1})) [J_{n_{i}, \lambda}] = 
(a_{n_{i}, \lambda_{0}}(u_{2}) + b_{n_{i}, \lambda_{0}}(\xi_{2}))  [J_{n_{i}, \lambda}] 
\end{equation}  %%%%%   Eqn  (50)   End
and
\begin{equation}  %%%%%%   Eqn  (51)  Begin
(a_{n_{i}, \lambda_{0}^{-1}}(u_{1}) + b_{n_{i}, \lambda_{0}^{-1}}(\xi_{1})) [J_{n_{i}, \lambda_{0}^{-1}}] =
( a_{n_{i}, \lambda_{0}^{-1}}(u_{2}) + b_{n_{i}, \lambda_{0}^{-1}}(\xi_{1}))  [J_{n_{i}, \lambda_{0}^{-1}}].
\end{equation}  %%%%%   Eqn   (51)   End
\par  
 We sum up formulas (50), (51).
We use the formulas (48), (49)  to get

\begin{equation}   %%%%%%   Eqn   (52)   Begin  
a_{n_{i}, \lambda_{0}}(u_{1}) [J_{n_{i} \lambda_{0}}]  +  a_{n_{i}, \lambda_{0}^{-1}}(u_{1}) [J_{n_{i} \lambda_{0}^{-1}}]  +
b_{n_{i}, \lambda_{0}}(\xi_{1})) ( [ J_{n_{i} \lambda_{0}}] +  [J_{n_{i} \lambda_{0}^{-1}}] ) =
\end{equation}
\begin{equation*}
= a_{n_{i}, \lambda_{0}}(u_{2}) [J_{n_{i} \lambda_{0}}]  +  a_{n_{i}, \lambda_{0}^{-1}}(u_{2}) [J_{n_{i} \lambda_{0}^{-1}}]  +
b_{n_{i}, \lambda_{0}}(\xi_{2})) (  [J_{n_{i} \lambda_{0}}]  +  [J_{n_{i} \lambda_{0}^{-1}}] ) 
\end{equation*}  %%%%%   Eqn   (52)    End
\par
We take into account formula (34) again to get rid of the terms coming from $[\xi_{1}]$ and $\xi_{2}$; formula (52) becomes
\begin{equation}  %%%%%   Eqn  (53)   Begin
 a_{n_{i}, \lambda_{0}}(u_{1})  [J_{n_{i}, \lambda_{0}}]+
a_{n_{i}, \lambda_{0}^{-1}}(u_{1})   [J_{n_{i}, \lambda_{0}^{-1}}]= 
\end{equation}
\begin{equation*}
=  a_{n_{i}, \lambda_{0}}(u_{2})  [J_{n_{i}, \lambda}]+
a_{n_{i}, \lambda_{0}^{-1}}(u_{2})   [J_{n_{i}, \lambda_{0}^{-1}}]. 
\end{equation*}  %%%%%   Eqn   (53)  End
This relation is precisely formula (37). 
\par
%%%%%%%%%%%%%%%%%%%%%%%%%%%%%%%%%%%%%%%%%%%%%%
The formulas (40)  to  (53) have referred to the case of Jordan canonical forms with 
$\lambda^{-1} \neq \lambda$. We consider the remaining cases in which follows.
\par
We assume again the situation described by equations (36), (38). We need to show that (37) holds in the 
special case in which the class $\hat{\lambda} \in \hat{\mathbb{C}}$ contains just one element instead of two.
\par
We use again formulas (40) - (43) and the corresponding coefficients $a_{n_{i}, \lambda}(u_{\alpha})$, 
$b_{n_{i}, \lambda}(u_{\alpha})$, $\alpha = 1, 2$. 
\par
Keeping in mind that $\lambda^{-1} = \lambda$, one has $J(J_{n_{i}, \lambda^{-1}}) =  J(J_{n_{i}, \lambda})) $
and hence
\begin{equation}  %%%%%   Eqn  (54)   Begin
\xi_{\alpha} = J_{n_{i}, \lambda} \oplus J_{n_{i}, \lambda^{-1}} = 2 J_{n_{i}, \lambda}
\end{equation}  %%%%%   Eqn  (54)   End
and
\begin{equation}  %%%%%   Eqn  (55)   Begin
[ \xi_{\alpha}] = [ J_{n_{i}, \lambda} \oplus J_{n_{i}, \lambda}] = 2  [ J_{n_{i}, \lambda} ] = 0 \in K_{1}^{loc}(\mathbb{C})
\end{equation} %%%%%   Eqn  (55)   End
With regard to the  previous equations (44) - (47), we see
that in  this special case we do not obtain pairs of such equations, but just one
%%%%%%%%%%%%%%%%%%%%%%%%%%%%%%%%  
\begin{equation} %%%%%%   Eqn   (56)   Begin
( a_{n_{i}, \lambda}(u_{1})+ 2 b_{n_{i}, \lambda}(\xi_{1})  )  J_{n_{i}, \lambda} =
( a_{n_{i}, \lambda}(u_{2})+ 2 b_{n_{i}, \lambda}(\xi_{2})  )  J_{n_{i}, \lambda}. 
\end{equation}  %%%%%%   Eqn   (56)  End
%%%%%%%%%%%%%%%%%%%%%%%%%%%%%%%%%%%%%%%%%%%%%%%%%%%%
We extract the coefficients from the formula (56); the Jordan composition of the invertible matrices of the formulas
(36), (38) provides the information
\begin{equation} %%%%%%   Eqn   (57)   Begin
a_{n_{i}, \lambda_{0}}(u_{1})+ 2 b_{n_{i}, \lambda_{0}}(\xi_{1})    =
 a_{n_{i}, \lambda_{0}}(u_{2})+ 2 b_{n_{i}, \lambda_{0}}(\xi_{2}).   
\end{equation}  %%%%%%   Eqn   (57)  End
Formula (57) gives
\begin{equation} %%%%%%   Eqn   (58)   Begin
a_{n_{i}, \lambda_{0}}(u_{1})  \cong
a_{n_{i}, \lambda_{0}}(u_{2})   \;\;\; mod. 2.   
\end{equation}  %%%%%%   Eqn   (58)  End
%%%%%%%%%%%%%%%%%%%%%%%%%%%%%%%%% 
We come back to the formula (56) (which expresses the Jordan composition of the four invertible elements of formula (36)) to look for necessary relations inside $K_{1}^{loc}(\mathbb{C})$
\begin{equation} %%%%%%   Eqn   (59)   Begin
 a_{n_{i}, \lambda}(u_{1})  [J_{n_{i}, \lambda}] =
 a_{n_{i}, \lambda}(u_{2})  [J_{n_{i}, \lambda}]. 
\end{equation}  %%%%%%   Eqn   (59)  End
%%%%%%%%%%%%
Relation (58) says that the formula (59) is indeed verified if the coefficients  $a_{n_{i}, \lambda}(u_{1})$,  $a_{n_{i}, \lambda}(u_{2})$
are taken modulo $2$. This is precisely what formulas (32), (33) of the lemma say.
%%%%%%%%%%%%%%%%%%%%%%%%%%%%%%%%%%%%%%%%%%%%%%%%%%%
\par
This completes te proof of Lemma 7.
\end{proof} 
%%%%%%%%%    End Proof  Lemma 7
%%%%%%%%%%%%%%%%%%%%%%%%%%%%%%%%%%%%%%%%%%% 
\par
We come back to the proof of Theorem 3.  Lemma 7 provides most of the information stated by Theorem 3. We only need to explain how the last term of the formula (4) appears.
We already established formula (53)
%%%%%%%
\begin{equation}  %%%%%   Eqn  (60)   Begin
 a_{n_{i}, \lambda_{0}}(u_{1})  [J_{n_{i}, \lambda_{0}}]+
a_{n_{i}, \lambda_{0}^{-1}}(u_{1})   [J_{n_{i}, \lambda_{0}^{-1}}]= 
 a_{n_{i}, \lambda_{0}}(u_{2})  [J_{n_{i}, \lambda}]+
a_{n_{i}, \lambda_{0}^{-1}}(u_{2})   [J_{n_{i}, \lambda_{0}^{-1}}]. 
\end{equation}  %%%%%   Eqn   (60)  End
We use relation (30) to eliminate $[J_{n_{i},\lambda^{-1}}]$ from formula (60); we get
\begin{equation}  %%%%%   Eqn  (61)   Begin
( a_{n_{i}, \lambda_{0}}(u_{1})  -  a_{n_{i}, \lambda_{0}^{-1}}(u_{1})  )
 [J_{n_{i}, \lambda_{0}}] =
 ( a_{n_{i}, \lambda_{0}}(u_{2})  -  a_{n_{i}, \lambda_{0}^{-1}}(u_{2})  )
 [J_{n_{i}, \lambda_{0}}] \in \mathbb{Z}
 \end{equation}  %%%%%   Eqn   (61)  End
 which explains how integer coefficients appear in the formula (4).
%%%%%%%%%%%%%%%%%%%%%%%%%%%%%%%%%%%%%%%%%%%  
\par
This ends the proof of Theorem  3. 
\end{proof}  %%%%%%%%   Proof  Theorem 3  END
%%%%%%%%%%%%%%%%%%%%%%%%%%%%%%%%%%%%%%%%%%
\newpage

%%%%%%%%%%%%%%%%%%%%%%%%%%%%%%%%%%%%%%%%%%

 \end{document}